\newtheorem{theorem}{Theorem}
\newtheorem{remark} {Remark}
\newtheorem{lemma} {Lemma}
\newtheorem{prop}{Proposition}
\newtheorem{claim}{Claim}
\newtheorem{definition}{Definition}
\date{\today}
\newcommand{\vare}{\varepsilon}
\newcommand{\K}{\mathcal{K}}
\newcommand{\va}{\varphi}
\newcommand{\blds}{\boldsymbol}
\newcommand{\ds}{\displaystyle}
\DeclareMathOperator{\dv}{div} %
\DeclareMathOperator{\curl}{curl} %
\DeclareMathOperator{\supp}{supp} %
\newcommand{\loc}{\scriptstyle \mathrm{loc}}
\newcommand{\bu}{\blds{u}}
\newcommand{\n}{\blds{n}}
\newcommand{\oo}{\omega}
\definecolor{Green}{rgb}{0.010,0.7,0.02}
\newcommand{\RR}{\mathbb{R}}
\newcommand{\R}{\mathbb{R}}
\newcommand{\real}{\mathbb{R}}
\keywords{Boundary layers, Navier-Stokes equations, Euler equations, inviscid limit, vorticity maximal function}
\begin{document}
\title[Vorticity measures and the inviscid limit]{Vorticity measures and the inviscid limit}
\author[Constantin, Lopes Filho, Nussenzveig Lopes, Vicol]
{Peter Constantin$^1$, Milton C. Lopes Filho$^2$,\\ Helena J. Nussenzveig Lopes$^2$ and Vlad Vicol$^{3}$}
\address{$^1$ Department of Mathematics, Princeton University, Princeton, NJ 08544, USA}
\address{$^2$ Instituto de Matematica, Universidade Federal do Rio de Janeiro, Caixa Postal 68530, 21941-909, Rio de Janeiro, RJ, Brazil}
\address{$^3$ Courant Institute, New York Univeristy, New York, NY 10012, USA}

\email{const@math.princeton.edu}
\email{mlopes@im.ufrj.br}
\email{hlopes@im.ufrj.br}
\email{vicol@cims.nyu.edu}

\begin{abstract}
We consider a sequence of Leray-Hopf weak solutions of the 2D Navier-Stokes equations on a  bounded domain, in the vanishing viscosity limit. We provide sufficient conditions on the associated vorticity measures, away from the boundary, which ensure that as the viscosity vanishes the sequence converges to a weak solution of the Euler equations. These assumptions are consistent with vortex sheet solutions of the Euler equations.
\end{abstract}

\maketitle


The behavior of high Reynolds number flows is a major open problem of nonlinear and statistical physics and of PDE theory. Here we discuss a limited aspect, namely the question whether solutions of the unforced two dimensional Navier-Stokes equations converge weakly on a fixed time interval to solutions of Euler equations in bounded domains. This problem is well understood in the absence of boundaries, in a smooth regime; the answer is then positive, and the convergence holds in strong topologies. The problem is however widely open in general in the presence of boundaries, and the answer is not obvious.  Boundary layers exist, and their limiting behavior is poorly understood. In this paper we follow up on a result obtained in \cite{CV2018} by the first and fourth author for two dimensional flows. We extend \cite[Theorem 2.1]{CV2018} by weakening the hypotheses; our main result (see Theorem~\ref{mainthm} below) allows us to consider vortex sheet solutions of the ideal fluid equations. We moreover give an explicit example of a vortex sheet limit Euler solution satisfying our weaker hypotheses (See Proposition~\ref{vortsheetexample} below). It is known that if the convergence is assumed in the vanishing {viscosity} limit, then vortex sheets must develop at the boundary~\cite{Kelliher2008}.
In fact, instability of strong shear flows and detachment of the boundary layer suggests that the limiting flow will not be smooth, see \cite{NWKFS2018} for a broad discussion and relevant numerical experiments.
Our result, in contrast with the Kato criterion~\cite{Kato1983}, applies without assuming closeness to a given
smooth solution of the Euler equations, and allows considering weak solutions, such as vortex sheets. The uniform conditions are imposed on the Navier Stokes solutions away from boundaries.

Let $T>0$ and let $\Omega \subset \RR^2$ be a bounded, smooth, connected and simply connected domain. Consider the initial boundary value problem for the incompressible Navier-Stokes equations with viscosity $\nu>0$, given by:
\begin{equation} \label{nuNS}
\left\{
\begin{array}{ll}
\bu_t+(\bu\cdot\nabla)\bu = -\nabla p + \nu \Delta\bu, & \text{ in } (0,T)\times\Omega, \\
\dv\bu = 0, & \text{ in } [0,T) \times \Omega,\\
\bu = 0, & \text{ on } (0,T) \times \partial\Omega,\\
\bu_{|_{t=0}} = \bu_0, & \text{ at } \{0\}\times\Omega.
\end{array}
\right.
\end{equation}

The initial boundary value problem for the incompressible Euler equations corresponds to taking $\nu = 0$ and substituting the {\em no slip} boundary condition $\bu=0 $ by the {\em non-penetration} condition $\bu\cdot\n=0$ on $(0,T)\times\partial\Omega$, where $\n$ represents the unit outer normal to $\partial\Omega$.

Let us begin by recalling the definition of a weak solution of the Euler equations.

\begin{definition} \label{weakEuler}
The vector field $\bu \in L^\infty((0,T);L^2(\Omega))$ is said to be a {\em weak solution} of the incompressible Euler equations if:
\begin{itemize}
\item  $\dv \bu (t,\cdot) = 0$, a.e. $t\in (0,T)$, in the sense of distributions, and
\item for each test vector field $\Phi \in C^\infty_c((0,T)\times\Omega)$ such that $\dv \Phi (t,\cdot) = 0$, the following identity holds true:
\begin{equation} \label{wkvel}
\int_0^T \int_\Omega \partial_t \Phi \cdot \bu \, dxdt + \int_0^T\int_\Omega \nabla \Phi : \bu \otimes \bu   \, dxdt = 0.
\end{equation}
\end{itemize}
\end{definition}

Throughout we will use the notation $\nabla^\perp\equiv(-\partial_{x_2},\partial_{x_1})$.

We are now ready to state our main result.

\begin{theorem} \label{mainthm}
Let $\nu_n$ be positive numbers such that $\nu_n\to 0$. Let $\bu^n \in L^\infty((0,T);L^2(\Omega))\cap L^2((0,T);H^1_0(\Omega))$ be a family of Leray-Hopf weak solutions of \eqref{nuNS} with viscosity $\nu=\nu_n$. Set $\oo^n=\oo^n(t,\cdot)=\curl \bu^n \equiv \nabla^\perp \cdot \bu^n(t,\cdot)$.

Assume the following:
\begin{enumerate}
\item there exists $\bu^\infty$ such that $\bu^n \rightharpoonup \bu^\infty$ weak-$\ast$ $L^\infty(0,T;L^2(\Omega))$;
\item $\{\oo^n\} \subset L^\infty((0,T);L^1_{\loc}(\Omega))$ and, for each $\mathcal{K}\subset\subset\Omega$, there exists $C_{\mathcal{K}} >0$ so that
\[\sup_n \sup_{t\in(0,T)} \|\oo^n (t,\cdot)\|_{L^1(\mathcal{K})} \leq  C_{\mathcal{K}}<\infty;\]
\item For any $\K \subset\subset\Omega$ we have
\[\sup_n \int_0^T \left(\sup_{x\in \K}  \int_{B(x;r)\cap\Omega}|\oo^n(t,y)|\,dy \right) \,dt \rightarrow 0 \mbox{ as } r\to 0.\]
\end{enumerate}
Then $\bu^\infty$ is a weak solution of the incompressible Euler equations in the sense of Definition \ref{weakEuler}.
\end{theorem}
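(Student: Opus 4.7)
The plan is to follow a Delort--Schochet type concentration-cancellation strategy, carried out inside $\Omega$ via a cutoff. The first requirement of Definition~\ref{weakEuler}, $\dv \bu^\infty = 0$ in $\mathcal{D}'((0,T)\times\Omega)$, is immediate: for any $\psi \in C^\infty_c((0,T)\times\Omega)$ the functional $\bu\mapsto\int \bu\cdot\nabla\psi$ is weak-$\ast$ continuous on $L^\infty_t L^2_x$, so hypothesis (1) yields $\int\bu^\infty\cdot\nabla\psi=\lim_n\int\bu^n\cdot\nabla\psi=0$. For the weak Euler identity \eqref{wkvel}, fix a div-free test field $\Phi \in C^\infty_c((0,T)\times\Omega)$ and start from the Leray--Hopf formulation for $\bu^n$. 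Since $\Phi$ has compact interior support and is div-free, the pressure drops and one obtains
$$\int_0^T\!\!\int_\Omega \partial_t\Phi\cdot\bu^n\,dx\,dt + \int_0^T\!\!\int_\Omega \nabla\Phi:\bu^n\otimes\bu^n\,dx\,dt = -\nu_n\int_0^T\!\!\int_\Omega \bu^n\cdot\Delta\Phi\,dx\,dt.$$
The first term converges to $\int \partial_t\Phi\cdot\bu^\infty$ by (1) tested against $\partial_t\Phi\in L^1_tL^2_x$, and the viscous term vanishes by $\nu_n\to 0$ and the uniform $L^\infty_tL^2_x$ bound on $\bu^n$. Everything therefore reduces to convergence of the nonlinear term.

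For the nonlinear term I would localize: pick $K_0\subset\subset\Omega$ containing $\supp\Phi(t,\cdot)$ for every $t$, a cutoff $\chi\in C^\infty_c(\Omega)$ with $\chi\equiv 1$ on $K_0$ and $K_1:=\supp\chi\subset\subset\Omega$, and decompose $\bu^n=\bu_B^n+\bu_H^n$, where
$$\bu_B^n(t,x):=\frac{1}{2\pi}\int_{\R^2}\frac{(x-y)^\perp}{|x-y|^2}\,\chi(y)\,\oo^n(t,y)\,dy$$
is the planar Biot--Savart reconstruction of the localized vorticity and $\bu_H^n:=\bu^n-\bu_B^n$. On $K_0$ the field $\bu_H^n$ is divergence-free and curl-free (both $\bu^n$ and $\bu_B^n$ have curl $\oo^n$ there), hence its components are harmonic; interior harmonic estimates combined with hypothesis (2) and the energy bound yield uniform $L^\infty_t C^k(K_0')$ bounds for any $K_0'\subset\subset K_0$ and any $k\geq 0$, so, along a subsequence, $\bu_H^n\to \bu_H^\infty$ uniformly on the spatial support of $\Phi$. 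Since $\bu_B^n=\bu^n-\bu_H^n$ is then bounded in $L^\infty_tL^2_x(K_0)$, the three terms of $\bu^n\otimes\bu^n$ involving at least one $\bu_H^n$ pass to the limit by weak-strong products.

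The remaining and genuinely delicate piece,
$$J^n:=\int_0^T\!\!\int \nabla\Phi:\bu_B^n\otimes\bu_B^n\,dx\,dt,$$
rewrites by Fubini as a double integral of an explicit kernel $H_\Phi(t,y,z)$ against the symmetric tensor $\chi(y)\chi(z)\oo^n(t,y)\oo^n(t,z)$. Although $H_\Phi$ is logarithmically singular along $\{y=z\}$, the Delort--Schochet observation is that its $(y,z)$-symmetrization extends continuously and boundedly across the diagonal; since $\oo^n\otimes\oo^n$ is already symmetric, one may replace $H_\Phi$ by its symmetrization for free. I would split the integration into $\{|y-z|\ge r\}$ and $\{|y-z|<r\}$: the off-diagonal piece converges by weak-$\ast$ convergence of the Radon measures $\chi\oo^n\,dx\,dt$ on $(0,T)\times K_1$ (which itself follows from hypothesis (2) together with time-equicontinuity of $\langle\bu^n(t,\cdot),\Psi\rangle$ for div-free $\Psi$ obtained from the Navier--Stokes weak formulation), while the near-diagonal piece is bounded by
$$C\int_0^T \sup_{y\in K_1} \int_{B(y,r)\cap\Omega} |\oo^n(t,z)|\,dz\,dt,$$
which by hypothesis (3) tends to $0$ as $r\to 0$ uniformly in $n$. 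Sending $n\to\infty$ first and then $r\to 0$ closes the argument.

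The principal obstacle is exactly the handling of the diagonal singularity of the Delort kernel. Hypothesis (3), with its integration in time and uniformity in $n$, is the sharp quantitative input needed here and is the reason the result is compatible with vortex-sheet limits (concentration of vorticity on curves of codimension one). The boundary $\partial\Omega$ never enters the analysis because of the localization, so the whole nonlinear argument reduces to a planar Biot--Savart computation on $K_1$.
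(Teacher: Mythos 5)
Your proposal is correct in its essentials but takes a genuinely different route from the paper. The paper never works directly with the velocity formulation of the nonlinear term: it passes to the vorticity equation, proves that $\oo^\infty$ satisfies an \emph{interior weak vorticity formulation} (Definition~\ref{intwkvort}) built on the domain's own Biot--Savart kernel $K_\Omega=\nabla_x^\perp G_\Omega$, and then invokes a separate equivalence result (Lemma~\ref{weakvortweakvel}, which is the bulk of the technical work, via mollified and boundary-cutoff vorticities $\oo^k_I$, $\oo^k_B$) to return to Definition~\ref{weakEuler}; the boundedness and off-diagonal continuity of the symmetrized kernel $H^\va_\Omega$ on $\overline\Omega\times\overline\Omega$ are imported from \cite{ILNInPrep,ILNSu}. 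You instead stay at the level of the velocity formulation, replace $K_\Omega$ by the whole-plane Biot--Savart kernel applied to $\chi\oo^n$, and absorb the discrepancy into a harmonic remainder $\bu^n_H$ controlled by interior elliptic estimates; this bypasses Lemma~\ref{weakvortweakvel} entirely and only requires the classical Delort--Schochet lemma for the planar kernel, at the price of having to justify the compactness of $\bu^n_H$ and the identification of its limit. Both arguments hinge on the same two inputs at the core: boundedness plus off-diagonal continuity of the symmetrized kernel, and Assumption~(3) to kill the near-diagonal contribution. Two points in your write-up need repair, though neither is fatal. First, the symmetrized kernel does \emph{not} extend continuously across the diagonal (the paper is explicit that $H^\va_\Omega$ is continuous only on $\overline\Omega\times\overline\Omega\setminus\{(x,x)\}$); it is merely bounded there, which is all your near-diagonal estimate actually uses, so delete the word ``continuously.'' Second, the off-diagonal limit requires convergence of the time-diagonal product measure $\chi\oo^n(t,y)\,\chi\oo^n(t,z)\,dy\,dz\,dt$, which does not follow from weak-$\ast$ convergence of $\chi\oo^n\,dy\,dt$ alone; you need strong compactness in time (the paper gets $\oo^n\to\oo^\infty$ in $L^\infty(0,T;H^{-M})$ from Aubin--Lions, using that $\partial_t\oo^n$ is bounded in $L^\infty(0,T;H^{-L})$ after rewriting $\dv(\bu^n\oo^n)$ as second derivatives of quadratic expressions in $\bu^n$) combined with the uniform $L^\infty_t\mathcal{BM}$ bound; your appeal to time-equicontinuity of $\langle\bu^n(t,\cdot),\Psi\rangle$ is the right ingredient but should be made to bear this specific load. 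You also need, as the paper does via \eqref{wklowsemicontnorms}, the lower-semicontinuity argument showing that the near-diagonal contribution of the \emph{limit} measure vanishes as $r\to0$, so that the iterated limit $n\to\infty$ then $r\to0$ actually recovers the full integral.
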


\begin{remark}
Let $\{\bu_0^n\}_n \subset L^2(\Omega)$ such that $\|\bu_0^n\|_{L^2} \leq C$. If $\bu^n$ is the unique Leray-Hopf weak solution of \eqref{nuNS} with viscosity $\nu^n$ and initial condition $\bu_0^n$, then $\{\bu^n\}$ is a bounded subset of $L^\infty((0,T);L^2(\Omega))$. Hence, passing to subsequences as needed, Assumption (1) automatically holds true in this case.
\end{remark}

\begin{remark} \label{noinitconds}
There is no mention in Definition \ref{weakEuler} of initial conditions. We observe however that it is easy to incorporate initial data into the weak formulation by taking test vector fields $\Phi \in C^\infty_c([0,T);\Omega)$. Now, if in Theorem \ref{mainthm} the initial data $\bu^n(0,\cdot)\equiv \bu_0^n$ converge weakly in $L^2(\Omega)$ to some $\bu^\infty_0$, then it follows that $\bu^\infty(0,\cdot)=\bu^\infty_0$ in this (new) weak sense as well.
\end{remark}

\begin{remark} \label{tanganddivfree}
We note that, by linearity, $\dv \bu^\infty (t,\cdot) = 0$ in the sense of distributions, a.e. $t \in (0,T)$, since $\dv \bu^n (t,\cdot)=0$. Now, because $\bu^\infty \in L^\infty((0,T);L^2(\Omega))$ is divergence free, its normal component at the boundary has a trace in $L^2(0,T;H^{-1/2}(\partial\Omega))$. Because of weak continuity of the trace operator, and as $\bu^n\in L^2((0,T);H^1_0(\Omega))$, the trace of the normal component of $\bu^\infty$ vanishes on $\partial \Omega$.
\end{remark}

\begin{remark} \label{wildstuff}
We observe in the proof that the vorticity $\oo^\infty = \curl\bu^\infty \equiv \nabla^\perp\cdot\bu^\infty$ belongs to $L^\infty((0,T);\mathcal{BM}_{\loc}(\Omega)\cap H^{-1}(\Omega))$. Moreover, we further show  that $\oo^\infty$ is a weak solution of the {\em vorticity formulation} of the incompressible 2-D Euler equations, in a sense to be made precise, see Definition \ref{intwkvort}. We contrast the solutions $\bu^\infty$ obtained here with {\em wild solutions} of the Euler equation (see for instance the review articles~\cite{CamilloLaszloReview2012,CamilloLaszloReview2017}, and the papers~\cite{BardosEtAl2014,BardosTiti2013} in the case of bounded domains). These wild solutions are also weak solutions in the sense of Definition \ref{weakEuler}, but the corresponding vorticity $\oo^\infty$ is not regular enough to be a weak solution of the vorticity formulation. Also, the {\em wild} weak solutions of the Navier-Stokes equation constructed in~\cite{BuckmasterVicol2017} have vorticity which does lie in $L^\infty(0,T; L^{1+\epsilon} \cap H^{-1+\epsilon})$ for some $\epsilon>0$, and they do converge in the inviscid limit to weak solutions of the Euler equations, but the $L^1$ norm of their vorticity degenerates as the viscosity vanishes (in contrast to Assumption (2) of Theorem~\ref{mainthm}).
\end{remark}

\begin{remark} \label{maxvortdecay}
Assumption (3) is referred to as (time integrated) uniform decay of the vorticity maximal function. Recall that the maximal function of vorticity is defined as
\[M_{\oo(t,\cdot)}(x) \equiv \sup_{r>0} \frac{1}{\pi r^2}\int_{B(x;r)\cap\Omega}|\oo (t,y)|\,dy,\]
so the object being considered in (3) is only reminiscent of the maximal function of vorticity. The terminology ``maximal vorticity function" was used in the   work of  DiPerna and  Majda, see \cite[page 65]{DM1988} and \cite[Theorem 3.1]{DM1987a}, while  studying the weak evolution of vortex sheet initial data. Conditions such as Assumption (3) have appeared previously as non-concentration conditions, for instance, in \cite{Schochet1995}, see also \cite{LNX2001}.
\end{remark}

\begin{remark} \label{assumpts2and3}
We note that if we replace Assumption (3) by Assumption (3'):
\[\sup_n \sup_{0\leq t \leq T} \left(\sup_{x\in \K}  \int_{B(x;r)\cap\Omega}|\oo^n(t,y)|\,dy \right) \,dt \rightarrow 0 \mbox{ as } r\to 0,\]
then Assumption (3') implies  Assumption (2). However, Assumption (3) is more natural in view of the analysis for mirror-symmetric flows, see \cite{LNX2001}.
\end{remark}

\begin{remark}
We emphasize that the assumptions of Theorem~\ref{mainthm} are only posed on compact subdomains $\mathcal K$. The constant $C_{\mathcal K}$ of Assumption (2) and the convergence rate of Assumption (3) are allowed to degenerate as ${\rm dist}(\mathcal K, \partial \Omega) \to 0$. A different practical set of interior sufficient conditions such that $\bu^\infty$ is a weak solution of the Euler equations is provided by~\cite[Theorem 3.1]{CV2018} in 3D and~\cite[Theorem 1]{DrivasNguyen18} in 2D. These are uniform bounds for the interior second order structure function of $\bu^n$, with arbitrarily small exponent,  in a  suitably defined inertial range of scales. These assumptions imply the uniform boundedness of $\oo^n$ in $L^2(0,T;H^{-1+\epsilon_{\mathcal K}}(\mathcal K))$ for some $\epsilon_{\mathcal K}>0$, and thus from the point of view of scaling, the assumptions of Theorem~\ref{mainthm} appear to be more general.

\end{remark}

Before we give the proof of Theorem \ref{mainthm} we  introduce the notion of interior weak solution and then we discuss the equivalence between this notion and the weak solutions as in Definition \ref{weakEuler}.

Denote $G_\Omega=G_\Omega(x,y)$ the Green's function for the Laplacian $\Delta$, with homogeneous Dirichlet boundary conditions on $\Omega$.
We write $G_\Omega[f]$ to denote $\Delta^{-1}_0(f)$, and
we denote the Biot-Savart kernel on $\Omega$ by $K_\Omega=K_\Omega(x,y) \equiv \nabla^\perp_xG_\Omega(x,y)$. 
We write $K_\Omega[f]$ to denote $\nabla^\perp_xG_\Omega[f]$.

\begin{definition} \label{intwkvort}
The scalar  $\oo \in L^\infty((0,T);\mathcal{BM}_{\loc} (\Omega) \cap H^{-1}(\Omega))$ is said to be an {\em interior weak solution} of the {\em vorticity} formulation of the incompressible Euler equations if:

\noindent for each test function $\varphi \in C^\infty_c((0,T)\times\Omega)$ there exists
$\chi=\chi(x) \in C^\infty_c(\Omega)$, satisfying $0\leq \chi \leq 1$ and $\chi \equiv 1$ in a neighborhood of the support of $\va$, such that the following identity holds true:
\begin{equation}
\label{intwkvortid}
\begin{aligned}
 & \int_0^T\int_{\Omega} \partial_t \va (t,x) \oo (t,x)\, dxdt  + \int_0^T\int_\Omega\int_\Omega H_\Omega^\va  (t,x,y)\chi(x) \oo(t,x)\chi(y) \oo(t,y) \, dxdydt\\
 & \quad \quad + \int_0^T\int_\Omega \int_\Omega K_\Omega(x,y)(1-\chi(y))\chi(x)\cdot\nabla\va(t,x)\oo(t,x)\oo(t,y)\,dxdydt = 0,
\end{aligned}
\end{equation}
where $H_\Omega^\va=H_\Omega^\va(x,y)$ is the {\em auxiliary test function} given by
\begin{equation} \label{auxtestfctn}
H_\Omega^\va(x,y) = \frac{K_\Omega(x,y)\cdot\nabla\va(t,x)+K_\Omega(y,x)\cdot\nabla\va(t,y)}{2}.
\end{equation}
As discussed in Remark~\ref{rem:chi} below, the choice of $\chi$ is not relevant.
\end{definition}

\begin{remark}
We are abusing notation above, as the low regularity of $\oo$ does not allow to write the integrals in identity \eqref{intwkvortid}. However, we remark that all the expressions above make sense when suitably interpreted  (cf.~the discussions after \eqref{Bn} and \eqref{limdeltaA2n} below).
\end{remark}

\begin{remark}
If instead, $\oo \in L^\infty((0,T);\mathcal{BM} (\Omega) \cap H^{-1}(\Omega))$ then the identity \eqref{intwkvortid}  makes sense even if $\chi \in C^\infty(\overline{\Omega})$, $\chi \equiv 1$, giving rise to the usual weak vorticity formulation of the 2D Euler equations, see \cite{Schochet1995} and \cite{ILNInPrep}.
\end{remark}

\begin{lemma} \label{weakvortweakvel}
Let $\oo^\infty \in L^\infty((0,T);\mathcal{BM}_{\loc} (\Omega) \cap H^{-1}(\Omega))$ be an interior weak solution of the vorticity formulation of the incompressible Euler equations. Then $\bu^\infty=K_{\Omega}[\oo^\infty]$ is a weak solution of the Euler equations in the sense of Definition \ref{weakEuler}.

Conversely, let $\bu^\infty \in L^\infty((0,T);L^2(\Omega))$ be a weak solution of the Euler equations in the sense of Definition \ref{weakEuler}. Let $\oo^\infty = \oo^\infty(t,\cdot) = \curl \bu^\infty (t,\cdot) \equiv \nabla^\perp\cdot\bu^\infty (t,\cdot)$, a.e. $t\in (0,T)$, in the sense of distributions. Assume that $\oo^\infty \in L^\infty((0,T);\mathcal{BM}_{\loc}(\Omega))$.
Then $\oo^\infty$ is an interior weak solution in the sense of Definition \ref{intwkvort}.
\end{lemma}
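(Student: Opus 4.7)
The proof hinges on the Biot--Savart identity $\bu^\infty = K_\Omega[\oo^\infty]$: in the first implication this is the definition of $\bu^\infty$, while in the second it follows from the 2D Biot--Savart law on the bounded simply-connected $\Omega$ applied to the divergence-free $\bu^\infty$ with vanishing normal trace (cf.~Remark~\ref{tanganddivfree}). Granted this identity, the two weak formulations are algebraically equivalent via the change of test field $\Phi = \nabla^\perp \va$, so the substance of the lemma lies in making the formal manipulations rigorous in the low-regularity class.

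For the algebraic core, with $\Phi = \nabla^\perp \va$ and $\va|_{\partial\Omega} = 0$, integration by parts yields $\int \Phi \cdot \bu \, dx = -\int \va \, \oo \, dx$, so the linear parts of \eqref{wkvel} and \eqref{intwkvortid} correspond. For the nonlinear term, the 2D Lamb identity $(\bu \cdot \nabla)\bu = \oo\, \bu^\perp + \nabla(|\bu|^2/2)$ together with $\Phi \cdot \bu^\perp = \bu \cdot \nabla \va$ and $\dv \Phi = 0$ gives
\[
\int \nabla \Phi : \bu \otimes \bu \, dx = -\int \oo \, \bu \cdot \nabla \va \, dx.
\]
Substituting $\bu = K_\Omega[\oo]$ and symmetrizing in $(x,y)$ converts the right-hand side into $-\int\int H_\Omega^\va(x,y)\, \oo(x)\, \oo(y)\, dxdy$. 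Splitting $\oo = \chi\oo + (1-\chi)\oo$ and using the symmetry $H_\Omega^\va(x,y) = H_\Omega^\va(y,x)$ together with $\nabla\va(y)(1-\chi(y)) \equiv 0$ (from $\chi \equiv 1$ on $\supp \va$) recovers the two bilinear terms of \eqref{intwkvortid} exactly.

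Thus the weak velocity formulation \eqref{wkvel} tested against $\Phi = \nabla^\perp \va$ is equivalent to the interior vorticity identity \eqref{intwkvortid} tested against $(\va, \chi)$. The converse direction runs the same computations in reverse, starting from $(\va, \chi)$ and setting $\Phi := \nabla^\perp \va \in C^\infty_c((0,T)\times\Omega)$. The $H^{-1}$ regularity required in Definition~\ref{intwkvort} is automatic from $|\langle \oo^\infty, \phi\rangle| \leq \|\bu^\infty\|_{L^2}\|\nabla \phi\|_{L^2}$ for $\phi \in H^1_0(\Omega)$, and in the first direction $\bu^\infty = K_\Omega[\oo^\infty] \in L^\infty(0,T; L^2(\Omega))$ follows from the $H^{-1} \to L^2$ boundedness of $\nabla^\perp \Delta^{-1}_0$.

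The main obstacle is the rigorous interpretation of the bilinear terms when $\oo^\infty$ is only a locally finite measure. The diagonal term $\int\int H_\Omega^\va \chi(x)\chi(y)\, \oo(x)\, \oo(y)$ pairs the compactly supported measure $\chi\oo$ with itself through a kernel that is bounded and continuous off the diagonal, with the logarithmic singularity of $K_\Omega$ at $x = y$ tamed by the symmetrization $H_\Omega^\va(x,y) = H_\Omega^\va(y,x)$; one interprets it by mollifying $\oo$ and passing to the limit. The off-diagonal term involves $K_\Omega(x,y)(1-\chi(y))\chi(x) \cdot \nabla\va(x)$, which is smooth in $y$ because $x \in \supp(\chi\nabla\va) \subset \{\chi = 1\}$ is separated from $y \in \{\chi < 1\}$, keeping $K_\Omega$ away from its singularity; this term is defined via $H^1/H^{-1}$ duality using the global $H^{-1}$ regularity of $\oo^\infty$. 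A secondary topological point is that a generic divergence-free $\Phi \in C^\infty_c$ may have bounded complementary components in its support, on which its stream function is a nonzero constant; this is handled by approximating $\Phi$ in appropriate norms by divergence-free fields with simply-connected supports, for which the stream function is compactly supported.
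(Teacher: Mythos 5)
Your outline is structurally close to the paper's argument (mollify the vorticity, symmetrize the kernel into $H^\va_\Omega$, split with the cutoff $\chi$, treat the off-diagonal piece by $H^{-1}/H^1_0$ duality), but it has a genuine gap at the central analytic point. You say the diagonal term $\int\!\!\int H_\Omega^\va\,\chi(x)\oo(x)\chi(y)\oo(y)$ is ``interpreted by mollifying $\oo$ and passing to the limit,'' but that passage to the limit is exactly what needs proof: $H_\Omega^\va$ is bounded yet \emph{discontinuous on the diagonal} of $\overline\Omega\times\overline\Omega$, so weak-$\ast$ convergence of $\oo^k_I\otimes\oo^k_I$ to $\chi\oo^\infty\otimes\chi\oo^\infty$ in $\mathcal{BM}$ does not by itself give convergence of the pairing with $H^\va_\Omega$, and the limit could a priori depend on the mollifier. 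The missing ingredient is the concentration--cancellation step \eqref{ookIconv}: because $\chi\oo^\infty\in\mathcal{BM}\cap H^{-1}(\Omega)$ it is a \emph{continuous} (atom-free) measure, hence every weak-$\ast$ limit of $|\oo^k_I\otimes\oo^k_I|$ gives zero mass to the diagonal; one then invokes the convergence lemma for test functions continuous off a negligible set (\cite[Lemma~6.3.1]{C1998}) to conclude \eqref{firstintegral} converges to the first bilinear term of \eqref{freezetime}. Without this, ``mollify and pass to the limit'' does not close.

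Two further points. First, to match the velocity side of \eqref{basicid} with $\int\nabla\Phi:\bu^\infty\otimes\bu^\infty$ you need \emph{strong} $L^2$ convergence of $\bu^k=K_\Omega[(\rho_k\oo^\infty)\ast\zeta_k]$ to $\bu^\infty$ (weak convergence is useless for a quadratic expression); this is nontrivial for $\oo^\infty$ only in $\mathcal{BM}_{\loc}\cap H^{-1}$ and the paper proves it via the duality identity \eqref{dragosmagic} and the Hardy inequality. Your proposal asserts the algebraic identity directly at the low-regularity level and never addresses this. Note also that the mollification must be combined with an interior cutoff $\rho_k$, since $\oo^\infty$ is only locally a measure and cannot be mollified up to $\partial\Omega$. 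Second, your ``secondary topological point'' is a non-issue: if $\va$ is a nonzero constant on a bounded hole of $\supp\Phi$ interior to $\Omega$, $\va$ is still compactly supported in $\Omega$; only the component of $\Omega\setminus\supp\Phi$ adjacent to $\partial\Omega$ needs the normalization $\va=0$, which is exactly what the paper does, so no approximation by fields with simply connected support is required.
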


\begin{remark}\label{rem:chi}
In view of Lemma \ref{weakvortweakvel} it follows that, given $\va \in C^\infty_c((0,T) \times \Omega)$,  identity \eqref{intwkvortid} in Definition \ref{intwkvort} is independent of the choice of $\chi \in C^\infty_c(\Omega)$ such that $0\leq \chi \leq 1$ and $\chi \equiv 1$ in a neighborhood of the support of $\va$.
\end{remark}

We postpone the proof of the lemma until after the proof of our main result.

\begin{proof}[Proof of Theorem \ref{mainthm}:]
The strategy of the proof is to use the vorticity equation and pass to the limit in a suitable weak formulation, namely the interior weak vorticity formulation. The proof is concluded once we establish the equivalence between this weak formulation for the vorticity equation and the weak velocity formulation in Definition \ref{weakEuler}, which is the content of Lemma \ref{weakvortweakvel}.

Let $\nu_n \to 0$ and let $\bu^n$ be a solution of the Navier-Stokes equations \eqref{nuNS} with viscosity $\nu_n$, as in the statement of Theorem \ref{mainthm}.
Then $\oo^n = \curl \bu^n$ is a solution of the vorticity formulation of the Navier-Stokes equations:
\begin{equation} \label{vortnuNS}
\partial_t \oo^n + \dv(\bu^n\oo^n) = \nu_n\Delta\oo^n,
\end{equation}
in $(0,T)\times\Omega$.

First, let us note that $\{\oo^n\}$ is bounded in $L^\infty(0,T;H^{-1}(\Omega))$. Next, we observe that we can rewrite the nonlinear term $\dv (\bu^n\oo^n)$ in \eqref{vortnuNS} as second derivatives of terms which are quadratic with respect to the components of $\bu^n$:
\[\dv (\bu^n\oo^n) = \left(\partial_{x_1}^2-\partial_{x_2}^2\right)\bu_1^n\bu_2^n - \partial^2_{x_1 x_2}[(\bu_1^n)^2-(\bu_2^n)^2].\]
It follows that $\{\partial_t\oo^n\}$ is a bounded subset of $L^\infty(0,T;H^{-L}(\Omega))$, for some large $L>0$. Thus, from the Aubin-Lions lemma, we obtain that $\{\oo^n\}$ is a compact subset of $L^\infty(0,T;H^{-M}(\Omega))$, for some $1<M \leq L$. Because we assumed that $\bu^n \rightharpoonup \bu^\infty$ weak-$\ast$ in $L^\infty(0,T;L^2(\Omega))$, it follows by linearity that the accumulation points of $\{\oo^n\}$ are all $\oo^\infty \equiv \nabla^\perp\cdot\bu^\infty$ and hence the whole sequence  $\{\oo^n\}$ converges strongly in $L^\infty(0,T;H^{-M}(\Omega))$, to $\oo^\infty$. Furthermore, clearly we have $\oo^n \rightharpoonup \oo^\infty$ weak-$\ast$ in $L^\infty(0,T;H^{-1}(\Omega))$.

We note that the vector field given by $K_\Omega[\oo^\infty]$ is divergence free, has  $\oo^\infty$ as its two dimensional curl, and is tangent to $\partial\Omega$. Since $\Omega$ was assumed to be  simply connected there is a unique vector field which is divergence free, has curl equal to $\oo^\infty$ and is tangent to the boundary of $\Omega$. Since $\bu^\infty$ satisfies these same conditions, see Remark \ref{tanganddivfree}, it follows that $\bu^\infty = K_\Omega[\oo^\infty]$.

Fix $\va \in C^\infty_c((0,T)\times\Omega)$. Multiplying \eqref{vortnuNS} by $\va$, integrating in $(0,T)\times\Omega$ and transferring derivatives to $\va$ leads to
\begin{equation}\label{firstweakvortnuNS}
\int_0^T\int_\Omega\partial_t\va(t,x) \oo^n(t,x) + \nabla\va(t,x)\cdot \bu^n(t,x) \oo^n(t,x)\,dxdt =
\nu_n\int_0^T\int_\Omega\Delta\va(t,x)\oo^n(t,x)\,dxdt.
\end{equation}

We wish to pass to the limit in each of the terms of \eqref{firstweakvortnuNS}.

The convergence of the linear terms follows easily from the convergence $\oo ^n \rightharpoonup \oo^\infty$ weak-$\ast$ in $L^\infty(0,T;H^{-1}(\Omega))$ and, in particular, in $\mathcal{D}^\prime((0,T)\times\Omega)$. Hence we have
\begin{align}\label{timederiv}
\int_0^T\int_\Omega\partial_t\va(t,x) \oo^n(t,x) \, dxdt & \to \int_0^T\int_\Omega\partial_t\va(t,x) \oo^\infty(t,x) \, dxdt, \text{ and } \\
\label{laplacian}
\nu_n\int_0^T\int_\Omega\Delta\va(t,x)\oo^n(t,x)\,dxdt & \to 0,
\end{align}
as $n \to \infty$.

It remains to treat the nonlinear term in \eqref{firstweakvortnuNS}. Let $\chi=\chi(x) \in C^\infty_c(\Omega)$ be a cutoff so that $0\leq \chi \leq 1$ and $\chi \equiv 1$ in a neighborhood of the support of $\va$. In particular, there exists $\eta>0$ such that the supports of $1-\chi$ and of $\va$ are at a distance $\eta$ apart.

We show that
\begin{align} \label{nonlinterm}
\nonumber & \int_0^T\int_\Omega \nabla\va(t,x)\cdot \bu^n(t,x) \oo^n(t,x)\,dxdt \to
\\
& \quad \quad \int_0^T\int_\Omega\int_\Omega H_\Omega^\va  (t,x,y)\chi(x) \oo^\infty(t,x)\chi(y) \oo^\infty(t,y) \, dxdydt \\
\nonumber & \quad \quad + \int_0^T\int_\Omega \int_\Omega K_\Omega(x,y)(1-\chi(y))\chi(x)\cdot\nabla\va(t,x)\oo^\infty(t,x)\oo^\infty(t,y)\,dxdydt
\end{align}
as $n \to \infty$, where $H^\va_\Omega$ was given in \eqref{auxtestfctn}.

Note that because $\bu^n(t,\cdot)$ satisfies the no slip boundary conditions, we have in particular that $\bu^n(t,\cdot)$ can be recovered from $\oo^n(t,\cdot)$ by the Biot-Savart law:
\[\bu^n(t,x)=\int_\Omega K_\Omega(x,y)\oo^n(t,y)\,dy;\]
this holds true in $L^\infty(0,T;L^2(\Omega))$.

We write the nonlinear term as:
\begin{align*}
&\int_0^T\int_\Omega \nabla\va(t,x)\cdot \bu^n(t,x) \oo^n(t,x)\,dxdt \\
&\quad =
\int_0^T\int_\Omega \int_\Omega K_\Omega(x,y)\oo^n(t,y)\cdot\nabla\va(t,x) \oo^n(t,x)\,dydxdt
\\
&\quad=\int_0^T\int_\Omega \int_\Omega K_\Omega(x,y)\oo^n(t,y)\cdot\nabla\va(t,x) \chi(x)\oo^n(t,x)\,dxdydt
\\
&\quad= \int_0^T\int_\Omega \int_\Omega K_\Omega(x,y)\chi(y)\oo^n(t,y)\cdot\nabla\va(t,x) \chi(x)\oo^n(t,x)\,dxdydt
\\
&\qquad +
\int_0^T\int_\Omega \int_\Omega K_\Omega(x,y)(1-\chi(y))\oo^n(t,y)\cdot\nabla\va(t,x) \chi(x)\oo^n(t,x)\,dxdydt
\\
&\quad\equiv A^n + B^n.
\end{align*}
We also introduce
\[A^\infty= \int_0^T\int_\Omega \int_\Omega H^\va_\Omega(x,y)\chi(y)\oo^\infty(t,y) \chi(x)\oo^\infty(t,x)\,dxdydt
\]
and
\[B^\infty = \int_0^T\int_\Omega \int_\Omega K_\Omega(x,y)(1-\chi(y))\oo^\infty(t,y)\cdot\nabla\va(t,x) \chi(x)\oo^\infty(t,x)\,dxdydt.\]

Let us first consider the limit of $B^n$. We note that, if $O_\eta \equiv \{(x,y)\in \Omega\times\Omega \,|\, |x-y|>\eta\}$, then, for a.e. $t\in(0,T)$, the support of the integrand in $B^n$ is contained in $O_\eta$, which avoids the singularity at the diagonal of $K_\Omega(x,y)$. Hence,
\[
B^n = \int_0^T\int_{O_\eta}K_\Omega(x,y)\cdot\nabla\va(t,x) (1-\chi(y))\chi(x)\oo^n(t,y)\oo^n(t,x)\,dxdydt.
\]
Now, for each $t\in (0,T)$, we have $K_\Omega(\cdot,\cdot)\cdot\nabla\va(t,\cdot) \in  C^\infty(O_\eta)$. Moreover, since the support of $\va$ is a compact subset of $(0,T)\times\Omega$, $K_\Omega(x,y)\cdot\nabla\va(t,x)$ vanishes if $x \in \partial \Omega$, for every $t\in(0,T)$. Because $G_\Omega(x,y)$ vanishes for $x\in \Omega$, $y\in \partial\Omega$, it follows that $K_\Omega(x,y)=\nabla^\perp_xG_\Omega(x,y)$ also vanishes for $x\in \Omega$, $y\in \partial\Omega$. Recall that $\oo^n \rightharpoonup \oo^\infty$ weak-$\ast$ in $L^\infty(0,T;H^{-1}(\Omega))$. By linearity, the tensor product $\oo^n \otimes \oo^n$ converges weak-$\ast$, in $L^\infty(0,T;H^{-1}(\Omega\times\Omega))$, to $\oo^\infty\otimes\oo^\infty$. From what we have argued it follows that $K_\Omega(x,y)\cdot\nabla\va(t,x)(1-\chi(y))\chi(x) \in L^1(0,T;H^1_0(\Omega\times\Omega))$, and so we conclude that
\begin{equation} \label{Bn}
B^n \to B^\infty \text{ as } n \to \infty,
\end{equation}
with the spatial integral in $B^\infty$ being interpreted as a duality pairing between $H^{-1}(\Omega \times \Omega)$ and $H^1_0(\Omega \times \Omega)$.

Next we address the convergence of $A^n$. Symmetrizing with respect to the variables $x$ and $y$ as was done in \cite{Schochet1995} for flows in all of $\real^2$, we find
\begin{align*}
A^n
&= \int_0^T\int_\Omega \int_\Omega K_\Omega(x,y)\chi(y)\oo^n(t,y)\cdot\nabla\va(t,x) \chi(x)\oo^n(t,x)\,dxdydt \\
&= \int_0^T\int_\Omega \int_\Omega K_\Omega(y,x)\chi(x)\oo^n(t,x)\cdot\nabla\va(t,y) \chi(y)\oo^n(t,y)\,dydxdt \\
&=\int_0^T\int_\Omega \int_\Omega H^\va_\Omega(x,y)\chi(y)\oo^n(t,y) \chi(x)\oo^n(t,x)\,dxdydt.
\end{align*}
We already know that $\chi\oo^n \to \chi\oo^\infty$ strongly in $L^\infty(0,T;H^{-M}(\Omega))$ and that $\chi\oo^n \rightharpoonup  \chi\oo^\infty$ weak-$\ast$ in $L^\infty(0,T;H^{-1}(\Omega))$. By hypothesis (2), $\{\chi\oo^n\}$ is bounded in $L^\infty(0,T;L^1(\Omega))$, hence, passing to subsequences as needed,  we find $\chi\oo^n$ is weak-$\ast$ convergent in $L^\infty(0,T;\mathcal{BM}(\Omega))$. Putting these facts together allows us to identify the weak-$\ast$-$L^\infty(0,T;\mathcal{BM}(\Omega))$ limit as $\chi\oo^\infty$, so that there is no need to pass to further subsequences; moreover, we have that $\chi\oo^\infty \in L^\infty(0,T;\mathcal{BM}(\Omega))$.

It was established in \cite[Proposition 2.1]{ILNInPrep}, see also \cite[Proposition 2.2]{ILNSu}, that
\[H_\Omega^\va \in L^\infty((0,T)\times\overline{\Omega}\times\overline{\Omega}),\]
analogously to what holds in $\real^2$, see \cite{Schochet1995}. Set $M \equiv \|H_\Omega^\va\|_{L^\infty((0,T)\times\overline{\Omega}\times\overline{\Omega})}.$
In addition, it was also shown in \cite[Proposition 2.1]{ILNInPrep}, see also \cite[Proposition 2.2]{ILNSu}, that $H_\Omega^\va$ is continuous on $\overline\Omega\times\overline\Omega\setminus\{(x,x)\ ;\ x\in \overline\Omega\}$. It is the fact that the diagonal, in $\overline\Omega \times \overline\Omega$, is excluded from the set of continuity of $H_\Omega^\va$ that makes the convergence of $A^n$ above a delicate problem -- we must split the integral in $A^n$ into a portion far from the diagonal and a portion near the diagonal.

Let $\rho \in C^\infty_c(\real)$ be such that  $\rho = 1$ on $[0,1/2]$,  $\supp (\rho) \subset [0,1]$, and $0\leq \rho \leq 1$. For each $\delta > 0$ set $\rho_\delta = \rho_\delta(x,y) = \rho (|x-y|/\delta)$. We rewrite $A^n$ as:
\begin{align*}
A^n &= \int_0^T\int_\Omega \int_\Omega H^\va_\Omega(x,y)\chi(y)\oo^n(t,y) \chi(x)\oo^n(t,x)\rho_\delta(x,y)\,dxdydt
\\
&\qquad  +\int_0^T\int_\Omega \int_\Omega H^\va_\Omega(x,y)\chi(y)\oo^n(t,y) \chi(x)\oo^n(t,x)(1-\rho_\delta(x,y))\,dxdydt \\
& \equiv A^n_1+A^n_2.
\end{align*}

We have:
\begin{equation} \label{A1n}
\sup_n|A^n_1| \leq M\sup_n \|\chi\oo^n\|_{L^\infty(0,T;L^1(\Omega))}\ \sup_n\int_0^T \left(\sup_{x\in\supp(\chi)}\int_{B(x;\delta)}|\chi\oo^n(t,y)|\,dy\right)\,dt.
\end{equation}
It follows from the decay of the vorticity maximal function, Assumption (3), that
\begin{equation} \label{limdeltaA1n}
\lim_{\delta\to 0} \sup_n|A^n_1| = 0.
\end{equation}

In addition,
\begin{equation} \label{A2n}
A^n_2  \to \int_0^T\int_\Omega \int_\Omega H^\va_\Omega(x,y)\chi(y)\oo^\infty(t,y) \chi(x)\oo^\infty(t,x)(1-\rho_\delta(x,y))\,dxdydt,
\end{equation}
as $n \to \infty$, since $H^\va_\Omega(x,y)(1-\rho_\delta(x,y))$ is a legitimate test function for the convergence of $\chi\oo^n \otimes \chi\oo^n$ to $\chi\oo^\infty \otimes \chi\oo^\infty$ weak-$\ast$-$L^\infty(0,T;\mathcal{BM}(\Omega \times \Omega))$.

The measures $\chi\oo^\infty (t,\cdot)$ are weak-$\ast$ limits of {\em continuous measures}, i.e., measures with no atomic parts. Because norms are weak-$\ast$ lower semicontinuous, we find
\begin{equation}\label{wklowsemicontnorms}
\int_{B(x;\delta)} |\chi\oo^\infty(t,y)|\,dy \leq \liminf_{n\to\infty} \int_{B(x;\delta)} |\chi\oo^n(t,y)|\,dy.
\end{equation}
Here we are abusing notation, writing $\ds{\int_{B(x;\delta)}} |\chi\oo^\infty(t,y)|\,dy$ for $\ds{\int_{B(x;\delta)}} d|\chi\oo^\infty(t,y)|$.

Therefore, from Assumption (3) and \eqref{wklowsemicontnorms}, we obtain
\[\int_0^T \left(\sup_{x\in \supp \chi} \int_{B(x;\delta)} |\chi\oo^\infty(t,y)|\,dy \right)\, dt \to 0,\]
as $\delta \to 0$.
An argument similar to what was used to study $A^n_1$ now gives
\begin{equation} \label{limdeltaA2n}
\lim_{\delta\to 0} \lim_{n\to \infty} A^n_2= A^\infty,
\end{equation}
where the integral in $A^\infty$ is to be interpreted as integration against a continuous measure, namely $\chi\oo^\infty \otimes \chi\oo^\infty$.

It follows that
\begin{equation} \label{An}
A^n \to A^\infty \text{ as } n\to\infty.
\end{equation}

Putting together \eqref{An} and \eqref{Bn} we deduce
\begin{equation} \label{AnplusBn}
A^n + B^n \to A^\infty+B^\infty,
\end{equation}
as $n \to \infty$. This establishes \eqref{nonlinterm}.

It follows from \eqref{firstweakvortnuNS}, \eqref{timederiv}, \eqref{laplacian} and \eqref{nonlinterm} that $\oo^\infty$ satisfies the interior weak vorticity formulation \eqref{intwkvortid}. Since we already know that $\oo^\infty \in L^\infty((0,T);H^{-1}(\Omega))\cap L^\infty((0,T);\mathcal{BM}_{\loc}(\Omega))$, we have established that $\oo^\infty$ is an interior weak solution of the incompressible 2D Euler equations, in the sense of Definition \ref{intwkvort}.

As $\bu^\infty=K_\Omega[\oo^\infty]$, the proof of the theorem is concluded once we establish Lemma \ref{weakvortweakvel}.
\end{proof}

Now we give the proof of Lemma \ref{weakvortweakvel}, which is a result on the equivalence between the weak velocity formulation and the interior weak vorticity formulation. The argument is based on the proofs of equivalence contained in \cite{ILNInPrep} and \cite{ILNSu}, with variations due to the fact that the vorticity is only a bounded Radon measure \emph{locally, in the interior of the fluid domain}. When regarded as a distribution in the entire fluid domain, the best regularity for $\oo^\infty$ is $H^{-1}$, which is the same as for wild solutions.

\begin{proof}[Proof of Lemma \ref{weakvortweakvel}]
Let us first assume that $\oo^\infty \in L^\infty((0,T);\mathcal{BM}_{\loc} (\Omega) \cap H^{-1}(\Omega))$ is an interior weak solution in the sense of Definition \ref{intwkvort}. Set $\bu^\infty \equiv K_\Omega[\oo^\infty]$. Now, $K_\Omega=\nabla^\perp(\Delta_0)^{-1}$ and therefore the operator $K_\Omega[\cdot]$ is continuous from $H^{-1}(\Omega)$ to $L^2(\Omega)$. It follows that $\bu^\infty \in L^\infty((0,T);L^2(\Omega))$. We also have from the definition of $K_\Omega$ that $\dv \bu^\infty = 0$ and $\curl \bu^\infty \equiv \nabla^\perp \cdot \bu^\infty = \oo^\infty$ in the sense of distributions, and that the trace of the normal component of $\bu^\infty$ vanishes at $\partial\Omega$, see also Remark \ref{tanganddivfree}.

Let $\Phi \in C^\infty_c(0,T;\Omega)$ with $\dv \Phi (t,\cdot) = 0$. Then $\Phi = \nabla^\perp \va$ for some $\va \in C^\infty(0,T;\Omega)$, and $\va(t,\cdot)$ is constant in a neighborhood of $\partial \Omega$.  Since $\Omega$ is connected and simply connected we may assume without loss of generality that this constant is $0$, so that
$\va   \in C^\infty_c((0,T)\times\Omega)$. From the relation between $\bu^\infty$ and $\oo^\infty$ we obtain that
\begin{equation} \label{tempterm}
\int_0^T \int_\Omega \partial_t \Phi \cdot \bu^\infty \, dxdt = - \int_0^T\int_{\Omega} \partial_t \va (t,x) \oo^\infty (t,x)\, dxdt.
\end{equation}

Let $\chi=\chi(x) \in C^\infty_c(\Omega)$, such that $0\leq \chi \leq 1$ and $\chi \equiv 1$ in a neighborhood of the support of $\va$. In order to establish the weak formulation \eqref{wkvel} for $\bu^\infty$, in view of \eqref{intwkvortid} and \eqref{tempterm}, it remains only to show that
\begin{align}\label{nonlintermy}
\nonumber & \int_0^T\int_\Omega \bu^\infty \otimes \bu^\infty : \nabla \Phi \, dxdt = \\
& \quad \quad - \int_0^T\int_\Omega\int_\Omega H_\Omega^\va  (t,x,y)\chi(x) \oo^\infty(t,x)\chi(y) \oo^\infty(t,y) \, dxdydt\\
\nonumber & \quad \quad - \int_0^T\int_\Omega \int_\Omega K_\Omega(x,y)(1-\chi(y))\chi(x)\cdot\nabla\va(t,x)\oo^\infty(t,x)\oo^\infty(t,y)\,dxdydt.
\end{align}

In the identity above $t$ is merely a parameter, so we  freeze time and show that
\begin{align} \label{freezetime}
\nonumber & \int_\Omega \bu^\infty \otimes \bu^\infty : \nabla \Phi \, dx = \\
& \quad \quad - \int_\Omega\int_\Omega H_\Omega^\va  (t,x,y)\chi(x) \oo^\infty(t,x)\chi(y) \oo^\infty(t,y) \, dxdy\\
\nonumber & \quad \quad - \int_\Omega \int_\Omega K_\Omega(x,y)(1-\chi(y))\chi(x)\cdot\nabla\va(t,x)\oo^\infty(t,x)\oo^\infty(t,y)\,dxdy.
\end{align}
Because time is frozen at $t$ we omit it hereafter.

We adapt what was done in \cite[Theorem 6.1 and Proposition 6.2]{ILNSu}, see also \cite[Theorem 3.4 and Proposition 3.5]{ILNInPrep}, to the situation we have, where only interior estimates are available. Let $\rho_k$ be a cutoff away from the boundary. More precisely, we assume that
\[
\rho_k\in C_c^\infty(\Omega;[0,1]), \quad
\rho_k\equiv1 \text{ in }\Sigma_{\frac2k}^c,
\quad \rho_k\equiv0 \text{ in }\Sigma_{\frac1k},
\quad  \|\nabla\rho_k\|_{L^\infty(\Sigma_{\frac2k}\setminus\Sigma_{\frac1k})}\leq Ck,
\]
where
\begin{equation*}
\Sigma_a=\{x\in\Omega\ ;\ \mathrm{dist}(x,\partial\Omega)\leq a\}.
\end{equation*}

In addition, we introduce
\begin{equation*}
\zeta\in C^\infty_c(\R^2;\R_+)  \mbox{, $\zeta$ is even},\quad \supp\zeta\subset B(0;1/2),\quad \int\zeta=1,
\end{equation*}
and set
\begin{equation}\label{etak}
  \zeta_k(x)=k^2\zeta(kx).
\end{equation}
Next, set $\oo^k\equiv (\rho_k\oo^\infty)\ast\zeta_k, $ $\bu^k \equiv K_\Omega[\oo^k]$.
We first note that $\bu^k$ and $\oo^k$ are smooth functions, $\oo^k=\curl\bu^k$, and $\bu^k$ is divergence free and tangent to $\partial\Omega$. Since $\Phi=\nabla^\perp\va$ and $\va$ is compactly supported, we obtain, by integration by parts,
\begin{equation} \label{basicid}
\int_\Omega \bu^k (x)\otimes \bu^k (x): \nabla \Phi (x)\, dx = -  \int_\Omega \bu^k(x)\cdot\nabla\va (x)\oo^k(x)\,dx.
\end{equation}

We wish to show that the left-hand-side and right-hand-side of \eqref{basicid} converge, respectively, to the left-hand-side and right-hand-side of \eqref{freezetime}.

We begin by analyzing the left-hand-side of \eqref{basicid}. To this end we claim that $\bu^k \to \bu^\infty$ strongly in $ L^2(\Omega) $. The proof of this claim follows precisely the proof of \cite[Proposition 4.8]{ILNSu}, see also \cite[Proposition 2.10]{ILNInPrep}, once we observe that $\oo^k \rightharpoonup \oo^\infty$ in $\mathcal{D}^\prime(\Omega)$. We give an outline for the convenience
of the reader, omitting some of the details. There are three steps: first it is shown that $\bu^k$ is bounded in $L^2(\Omega)$, then it is established that $\bu^k \rightharpoonup \bu^\infty$ weakly in $L^2(\Omega)$. Finally, it is proved that $\|\bu^k\|_{L^2(\Omega)} \to \|\bu^\infty\|_{L^2(\Omega)}$. To show the first step consider $\blds{F}\in C^\infty(\overline{\Omega})$, and set $f\equiv G_\Omega[\curl \blds{F}]$, where $\curl \blds{F} \equiv \nabla^\perp \cdot \blds{F}$. Then $f\in C^\infty(\overline{\Omega})$, $f$ is bounded and vanishes at $\partial\Omega$. Step 1 follows from the observation that
\begin{equation} \label{dragosmagic}
\int_\Omega \bu^k \cdot \blds{F} = - \int_\Omega \oo^k f =  \int_\Omega ( \bu^\infty \cdot \nabla^\perp \rho_k ) \, (f\ast {\zeta_k}) + \int_\Omega \rho_k\bu^\infty\cdot(\nabla^\perp f \ast {\zeta_k}) .
\end{equation}
Using the Hardy inequality it follows that
\[\left| \int_\Omega \bu^k \cdot \blds{F} \right| \leq C \|\bu^\infty\|_{L^2(\Omega)}\|\blds{F}\|_{L^2(\Omega)}.\]
Step 2 holds by virtue of the convergence $\oo^k \rightharpoonup \oo^\infty$ in $\mathcal{D}^\prime(\Omega)$, which is classical. To prove Step 3, $\blds{F}=\bu^k$ is used in \eqref{dragosmagic}, so that
\[
\|\bu^k\|_{L^2(\Omega)}^2 =  \int_\Omega ( \bu^\infty \cdot \nabla^\perp \rho_k ) \, (G_\Omega [\oo^k]\ast {\zeta_k}) + \int_\Omega \rho_k\bu^\infty\cdot(\bu^k \ast  {\zeta_k}).
\]
The first term vanishes as $k\to\infty$ and the second term converges to $\|\bu^\infty\|_{L^2(\Omega)}^2$.

In view of the strong convergence of $\bu^k$ to $\bu^\infty$ it follows that the left-hand-side of \eqref{basicid} converges to the left-hand-side of \eqref{freezetime}.

Now we discuss the right-hand-side of \eqref{basicid}. Let $\chi$ be a cutoff for the support of $\va$ as in the proof of Theorem \ref{mainthm} and set
\[\eta\equiv \mathrm{dist}\{\supp{(1-\chi)}, \supp{\va}\} > 0.\]

We  decompose $\oo^k$ into an interior part and a boundary part:
\[\oo^k_I\equiv (\rho_k\chi\oo^\infty)\ast \zeta_k, \quad  \oo^k_B\equiv\oo^k - \oo^k_I =  [\rho_k(1-\chi)\oo^\infty)]\ast \zeta_k.\]
Correspondingly, we decompose the velocities $\bu^k$:
\[\bu^k_I\equiv K_\Omega [\oo^k_I], \quad  \bu^k_B\equiv \bu^k - \bu^k_I = K_\Omega[\oo^k_B].\]

With this notation we rewrite the right-hand-side of \eqref{basicid} as
\begin{align} \label{intxbdry}
\nonumber &   \int_\Omega \bu^k(x)\cdot\nabla\va (x)\oo^k(x)\,dx  \\
 &\quad \quad =  \int_\Omega \bu^k_I(x)\cdot\nabla\va (x)\oo^k_I(x)\,dx  +  \int_\Omega \bu^k_B(x)\cdot\nabla\va (x)\oo^k_I(x)\,dx \\
\nonumber &\quad \quad    + \int_\Omega \bu^k_I(x)\cdot\nabla\va (x)\oo^k_B(x)\,dx  + \int_\Omega \bu^k_B(x)\cdot\nabla\va (x)\oo^k_B(x)\,dx.
\end{align}

We claim that for sufficiently large $k$ the two integrals in the last line above vanish. To see this, first recall that the support of $1-\chi$ is at a distance $\eta > 0$ from the support of $\va$. Let $k > 1/\eta$. Then, by construction, if $x \in \supp{\va}$ and $y \in \Omega$ is such that $|x-y|< \eta/2$, it follows that $\oo^k_B(y)=0$. In particular, if $k> 1/\eta$, $\nabla\va \,\oo^k_B \equiv 0$ on all of $\Omega$.

Next we analyze the second integral on the right-hand-side of \eqref{intxbdry}:
\begin{equation} \label{secondintegral}
  \int_\Omega \bu^k_B(x)\cdot\nabla\va (x)\oo^k_I(x)\,dx = \int_\Omega \int_\Omega K_\Omega(x,y)\cdot\nabla \va(x) \oo^k_I(x)\oo^k_B(y)\,dxdy.
\end{equation}
As noted above, if $k > 1/\eta$ then $\mathrm{dist}\{\supp{\va}, \,\supp{\oo^k_B}\} \geq \eta/2$. Let $\psi=\psi(z)$ be a cut-off of $|z|\geq \eta/2$, so that $\psi \in C^\infty (\real^2)$, $0\leq \psi \leq 1$, $\psi(z) \equiv 1$ if $|z|\geq \eta/2$ and $\psi(z) \equiv 0$ if $|z|<\eta/4$. Then, for $k > 1/\eta$, we may re-write \eqref{secondintegral} as
\begin{equation} \label{secondintegralprime}
  \int_\Omega \bu^k_B(x)\cdot\nabla\va (x)\oo^k_I(x)\,dx = \int_\Omega \int_\Omega K_\Omega(x,y)\cdot\nabla \va(x) \psi(x-y) \oo^k_I(x)\oo^k_B(y)\,dxdy.
\end{equation}
Now, arguing similarly to what was done in the proof of Theorem \ref{mainthm}, we obtain $K_\Omega (x,y)\cdot\nabla\va (x)\psi(x-y)\in H^1_0(\Omega\times\Omega)$. We easily deduce that $\oo^k_I \otimes \oo^k_B \rightharpoonup \chi\oo^\infty \otimes (1-\chi)\oo^\infty$ weakly in $H^{-1}(\Omega \times \Omega)$. Therefore, the sequence of integrals on the left-hand-side of \eqref{secondintegral} converge to the second integral on the right-hand-side of \eqref{freezetime}.

Finally, we deal with the first integral on the right-hand-side of \eqref{intxbdry}. We have:
\begin{equation} \label{firstintegral}
\int_\Omega \bu^k_I(x)\cdot\nabla\va (x)\oo^k_I(x)\,dx = \int_\Omega \int_\Omega H_\Omega^\va (x,y) \oo^k_I(x)\oo^k_I(y)\,dxdy.
\end{equation}

Now, $\chi \oo^\infty \in \mathcal{BM} \cap H^{-1}(\Omega)$,  hence the result in \cite[Proposition 4.8]{ILNSu}, see also \cite[Proposition 2.10]{ILNInPrep}, applies to $\oo^k_I$:
\begin{align} \label{ookIconv}
\nonumber & \oo^k_I \mbox{ is bounded in } L^1(\Omega), \quad \oo^k_I \rightharpoonup \chi\oo^\infty \mbox{ weak}-\ast \mathcal{BM}(\Omega) \mbox{ and } \\
& \mbox{any weak$-\ast$ limit in $\mathcal{BM}(\Omega)$,  $\mu$, of $|\oo^k_I|$ is a continuous measure,} \\
\nonumber & i.e., \mu (P) = 0, \mbox{ for any } P \in \Omega.
\end{align}
For convenience of the reader we give an outline of the proof of \eqref{ookIconv}; more details can be found in \cite{ILNSu,ILNInPrep}. The statements on the first line of \eqref{ookIconv} follow from the definition of $\oo^k_I$. Let us discuss the last statement in \eqref{ookIconv}, regarding  weak$-\ast$ limits in $\mathcal{BM}(\Omega)$ of $|\oo^k_I|$.
 Let $\chi\oo^\infty = \nu^+ + \nu^-$ be an orthogonal (Hahn-Jordan) decomposition into positive and negative parts, with disjoint supports. Since $\chi \oo^\infty \in H^{-1}(\Omega)$ is a measure, it is necessarily a continuous measure. Since the supports of $\nu^+$ and $\nu^-$ are disjoint, they too are each continuous measures.   Write  $\oo^k_I = (\rho_k \nu^+)\ast\zeta_k + (\rho_k \nu^-)\ast\zeta_k$; this is a decomposition into positive and negative measures, albeit with supports that are no longer necessarily disjoint. Still, we have
 \[|\oo^k_I| \leq (\rho_k \nu^+) \ast \zeta_k - (\rho_k \nu^-)\ast\zeta_k.
 \]
In addition, $(\rho_k \nu^{\pm})\ast\zeta_k \rightharpoonup \nu^{\pm}$ weak-$\ast$ in $\mathcal{BM}(\Omega)$.
 Now, let $\mu$ be a weak-$\ast$ $\mathcal{BM}(\Omega)$ limit of $|\oo^k_I|$. Then we find $\mu$ is a nonnegative measure which is bounded in the sense of measures by $\nu^+-\nu^-\equiv |\chi\oo^\infty|$. Therefore, $0\leq \mu(P) \leq |\chi\oo^\infty|(P)=0$ for any $P \in \Omega$, as desired.

Next we recall \cite[Lemma 6.3.1]{C1998}, where it was established that, if $\nu_k \rightharpoonup \nu$ weak-$\ast$ $\mathcal{BM}$, then $\int f \nu_k \to \int f \nu$ for any bounded test function $f$ which is continuous off of a $\mu$-negligible set, where $\mu$ is any weak-$\ast$ limit of $|\nu_k|$. We may use this result with $f = H_\Omega^\va$ and $\nu_k = \oo^k_I\otimes\oo^k_I$, since it has already been observed, in the proof of Theorem \ref{mainthm}, that $H_\Omega^\va$ is continuous off of the diagonal of $\overline\Omega \times \overline \Omega$, and we have established in \eqref{ookIconv}, that the diagonal is a negligible set for any weak-$\ast$ limit of $|\oo^k_I\otimes\oo^k_I|$, together with the fact that $\oo^k_I\otimes\oo^k_I \rightharpoonup \chi\oo^\infty\otimes\chi\oo^\infty$. We conclude that the integrals on the left-hand-side of \eqref{firstintegral} converge to the first integral on the right-hand-side of \eqref{freezetime}.

Putting together our analysis of the terms in \eqref{intxbdry} we obtain that the right-hand-side of \eqref{basicid} converges to the right-hand-side of \eqref{freezetime}. This establishes \eqref{nonlintermy}.

Conversely, suppose that $\bu^\infty \in L^\infty((0,T);L^2(\Omega))$ is a weak solution of the Euler equations as in Definition \ref{weakEuler}, and assume further that $\oo^\infty\equiv\curl\bu^\infty \in L^\infty((0,T);\mathcal{BM}_{\loc}(\Omega)\cap H^{-1}(\Omega))$. Let $\va \in C^\infty_c((0,T)\times\Omega)$ and set $\Phi\equiv\nabla^\perp\va$. It is easy to see that $\Phi \in C^\infty_c((0,T)\times\Omega)$, $\dv \Phi = 0$ and \eqref{tempterm} holds true. In addition, in view of the regularity assumption on $\oo^\infty$, the proof we gave of \eqref{nonlintermy}, for a suitable cut-off function $\chi$, may be used once again. Putting together \eqref{tempterm}, \eqref{nonlintermy}  and \eqref{wkvel} yields that $\oo^\infty$ satisfies \eqref{intwkvortid}. This concludes the proof.
\end{proof}

Next we give an example of a sequence of solutions of the Navier-Stokes equations which satisfy the hypothesis of Theorem \ref{mainthm} and for which the limiting Euler solution is not smooth. In fact, it is a vortex sheet and thus it falls outside the scope of the Kato criterion.

We take $\Omega = \{x\in\real^2 \;|\; |x|<1\}$, the unit disk. Set
\begin{equation} \label{u0vortsheet}
\bu_0 =
\left\{
\begin{array}{ll}
0, & \text{ if } |x|< \frac{1}{2},\\
\ds{\frac{x^\perp}{|x|^2}}, & \text{ if } \frac{1}{2} < |x| < 1.
\end{array}
\right.
\end{equation}
The corresponding vorticity is
\begin{equation} \label{omega0vortsheet}
\oo_0 = \delta_{\{|x|=1/2\}}.
\end{equation}

Let $\nu_n$ be any sequence of positive numbers such that $\nu_n \to 0$.  Choose circularly symmetric approximations $\bu_0^n \in C^\infty (\Omega) \cap L^2(\Omega)$, $\dv \bu^n_0=0$, $\bu^n_0\cdot\n_{\big|_{\{|x|=1\}}}=0$, such that
\[\bu_0^n \to \bu_0 \text{ strongly in } L^2(\Omega) \text{ and, for } \oo_0^n=\nabla^\perp\cdot\bu_0^n, \text{ it holds that}\]
\[ \oo_0^n \geq 0, \quad \oo_0^n \text{ circularly symmetric, }
\text{ and } \|\oo_0^n \|_{L^1(\Omega)} \leq \|\oo_0\|_{\mathcal{BM}(\Omega)} \equiv \pi.\]

Let $\bu^n$ be the unique solution of the initial boundary value problem for the Navier-Stokes equations \eqref{nuNS} with viscosity $\nu_n$ and initial velocity $\bu_0^n$ above, as in Theorem \ref{mainthm}. Let $\oo^n = \nabla^\perp \cdot \bu^n$.

\begin{prop} \label{vortsheetexample}
The sequences $\{\bu^n\}$, $\{\oo^n\}$ satisfy Assumptions (1), (2) and (3) of Theorem \ref{mainthm}. Furthermore, the weak limit of $\bu^n$, $\bu^\infty$, is time-independent and equal to $\bu_0$.
\end{prop}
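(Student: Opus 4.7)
My plan is to exploit the circular symmetry of the data, which reduces the Navier--Stokes dynamics to a linear parabolic problem. Since $\bu_0^n \in C^\infty(\Omega)$, the unique Leray--Hopf solution $\bu^n$ is smooth for $t > 0$, and uniqueness forces it to be circularly symmetric: $\bu^n = v^n(r,t) e_\theta$. The self-advection $(\bu^n\cdot\nabla)\bu^n = -((v^n)^2/r)\,e_r$ is a radial gradient, hence absorbed into the pressure, and the tangential momentum equation reduces to $\partial_t v^n = \nu_n(\Delta v^n - v^n/r^2)$. Applying $\curl$, the vorticity obeys the simple heat equation $\partial_t \oo^n = \nu_n \Delta \oo^n$. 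The key structural point is then to identify the correct boundary condition: using $\oo^n = \partial_r v^n + v^n/r$ and $v^n(1,t)\equiv 0$ for $t>0$, the equation at $r=1$ forces $\partial_r^2 v^n(1,t) + \partial_r v^n(1,t) = 0$, whence $\partial_r \oo^n(1,t) = 0$. Thus $\oo^n$ satisfies Neumann boundary conditions.

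Once this Neumann heat equation is in hand, the maximum principle gives $\oo^n(t) \geq 0$ and the divergence theorem yields $\|\oo^n(t)\|_{L^1(\Omega)} = \|\oo_0^n\|_{L^1(\Omega)} \leq \pi$ for all $t\in(0,T)$; this establishes Assumption (2) globally, not merely locally. For Assumption (3), I would fix $\K \subset\subset \Omega$ with $\delta \equiv \mathrm{dist}(\K, \partial\Omega \cup \{0\}) > 0$. For $x\in\K$ and $r<\delta/2$, writing $\oo^n(t,y)=g^n(t,|y|)$ and decomposing in polar coordinates centered at the origin gives
\[
\int_{B(x;r)\cap\Omega} \oo^n(t,y)\, dy = \int_{||y|-|x||<r} g^n(t,s)\, L_r(|x|,s)\, ds,
\]
where $L_r(a,s)$ is the arclength of $\{|y|=s\}\cap B(x;r)$. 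An elementary geometric estimate yields $L_r(|x|,s)\leq C_\delta\, r$, and combining with the nonnegativity of $\oo^n$ one bounds $\int g^n(t,s)\,\chi_{|s-|x||<r}\, ds \leq (|x|-r)^{-1}\|\oo^n(t)\|_{L^1}/(2\pi)\lesssim \delta^{-1}$. Consequently $\sup_{x\in\K} \int_{B(x;r)\cap\Omega} \oo^n(t,y)\, dy \lesssim r/\delta$ uniformly in $(t,n)$, and integrating in time yields Assumption (3).

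Assumption (1) follows from the Leray energy inequality $\|\bu^n(t)\|_{L^2}^2 \leq \|\bu_0^n\|_{L^2}^2 \leq C$, which provides weak-$*$ precompactness in $L^\infty(0,T;L^2(\Omega))$. To identify the limit, I would invoke Theorem \ref{mainthm} to conclude that any weak-$*$ subsequential limit $\bu^\infty$ is a weak solution of the Euler equations; circular symmetry passes to the limit, so $\bu^\infty = v^\infty(r,t) e_\theta$. Testing the weak formulation \eqref{wkvel} against $\Phi=\chi(t)\eta(r) e_\theta$, which is automatically divergence-free for $\chi\in C^\infty_c(0,T)$ and $\eta\in C^\infty_c((0,1))$, annihilates the nonlinear term (again by the gradient structure of $\bu^\infty\otimes\bu^\infty$ paired against a divergence-free field) and forces $t\mapsto \int_0^1 \eta(r) v^\infty(r,t)\, r\, dr$ to be constant. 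Hence $v^\infty$ is time-independent. Remark \ref{noinitconds} together with the strong $L^2$ convergence $\bu_0^n\to \bu_0$ identifies $\bu^\infty(0,\cdot)=\bu_0$, so $\bu^\infty=\bu_0$ along every convergent subsequence, and the full sequence converges weak-$*$ to $\bu_0$.

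The main obstacle I anticipate is the clean justification of the Neumann boundary condition for $\oo^n$, since it relies on regularity of $v^n$ up to $\partial\Omega$. For the sequence $\bu^n$ arising from smooth circularly symmetric data this follows from the standard parabolic smoothing of 2D Navier--Stokes; alternatively, one may pass through the circulation variable $\Gamma^n = 2\pi r v^n$, which satisfies a maximum principle with homogeneous Dirichlet data at $r=0,1$, to extract the same sign and $L^1$ information.
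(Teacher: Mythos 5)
Your reduction to a radially symmetric heat problem is the right starting point, and the formal computation $\partial_r\oo^n(1,t)=0$ for $t>0$ is correct, but the step where you treat $\oo^n$ as \emph{the} solution of the compatible Neumann initial--boundary value problem with data $\oo_0^n$ is where the argument breaks. The initial data are incompatible with no slip: $\int_\Omega \oo_0^n\,dx = 2\pi v_0^n(1) \neq 0$ (it must be close to $2\pi$ for $\bu_0^n\to\bu_0$ in $L^2$, since $\bu_0$ has unit circulation near the boundary), whereas Stokes' theorem and $\bu^n(t,\cdot)|_{\partial\Omega}=0$ force $\int_\Omega\oo^n(t,y)\,dy=0$ for every $t>0$. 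So a sheet of vorticity of total mass $-2\pi v_0^n(1)$ is produced at the boundary at $t=0^+$, and $\oo^n(t,\cdot)$ only converges to $\oo_0^n$ in $H^{-1}$ (which does not see constants or boundary sheets), not in $L^1$. Consequently both of your key claims are false: $\oo^n(t,\cdot)$ cannot be nonnegative (a nonnegative function with zero integral vanishes identically, contradicting $\bu^n\to\bu_0\neq0$), and the $L^1$ norm is not conserved. The correct statement is a bound such as $\|\oo^n\|_{L^\infty L^1}\leq 4\|\oo_0^n\|_{L^1}$, which is what the paper imports from \cite[Proposition 9.4]{LMNT2008}; your own fallback via the circulation $\Gamma^n=2\pi r v^n$ (Dirichlet data at $r=0,1$, maximum principle, and the fact that the number of sign changes of $\partial_r\Gamma^n$ does not increase) would in fact yield this corrected bound, $\|\oo^n(t)\|_{L^1}\leq 2\max\Gamma_0^n$, rather than ``the same sign and $L^1$ information'' you claimed.

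There are two further gaps. First, your verification of Assumption (3) requires $\mathrm{dist}(\K,\{0\})>0$, because the estimate $\int_{||y|-|x||<r}|g^n|\,ds \lesssim (|x|-r)^{-1}\|\oo^n\|_{L^1}$ degenerates at the origin; but Assumption (3) must hold for every $\K\subset\subset\Omega$, and radial symmetry plus an $L^1$ bound alone do not rule out concentration at the origin (consider $g^n = n^2\chi_{[0,1/n]}$). The paper avoids this by decomposing $\oo^n$ near $\K$ into a nonnegative part $I_n$ bounded in $H^{-1}$, for which the potential-theoretic estimate $\int_{B(x;r)}I_n \lesssim \|I_n\|_{H^{-1}}|\log r|^{-1/2}$ gives decay uniformly over \emph{all} interior points, plus a uniformly bounded remainder $I\!\!I_n$. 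Second, your identification of $\bu^\infty=\bu_0$ routes through Theorem \ref{mainthm} and Remark \ref{noinitconds}, which requires rerunning the limit argument with test functions not vanishing at $t=0$; the paper's route is more direct and stronger: by circular symmetry the problem is linear, so $\bu^n-\blds{v}^n$ solves \eqref{nuNS} with data $\bu_0^n-\bu_0$ and the energy inequality gives $\sup_t\|\bu^n-\blds{v}^n\|_{L^2}\to0$, while $\blds{v}^n\to\bu_0$ strongly in $L^\infty_tL^2_x$ by \cite{LMN2008}, yielding strong (not merely weak-$\ast$) convergence to $\bu_0$.
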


\begin{proof}[Proof of Proposition~\ref{vortsheetexample}]
We first note that $\bu_0 \in L^2(\Omega)$ is circularly symmetric. Let $\blds{v}^n$ denote the solution of \eqref{nuNS} with viscosity $\nu_n$ and initial data $\bu_0$. It follows from the analysis developed in \cite{LMN2008} that
\[\blds{v}^n \rightarrow \bu_0, \text{ strongly in } L^\infty((0,T);L^2(\Omega)).\]
In addition, $\bu^n-\blds{v}^n$ is the solution of \eqref{nuNS} with viscosity $\nu_n$ and initial data $\bu_0^n-\bu_0$. The most elementary of energy estimates yields that
\[
\sup_{t \in [0,T]} \|\bu^n-\blds{v}^n\|_{L^2(\Omega)}\leq \|\bu_0^n-\bu_0\|_{L^2(\Omega)} \rightarrow 0 \text{ as } n \to \infty.\]
Hence $\{\bu^n\}$ satisfies Assumption (1) with $\bu^\infty=\bu_0$.

Next we recall the result in \cite[Proposition 9.4]{LMNT2008}, where it was established that
\[\|\oo^n\|_{L^\infty((0,T);L^1(\Omega))} \leq 4\|\oo^n_0\|_{L^1(\Omega)}.\]
Therefore we find, from the construction of $\bu_0^n$,
\[\|\oo^n\|_{L^\infty((0,T);L^1(\Omega))} \leq 4\pi,\]
which gives Assumption (2).

Lastly, we establish the uniform decay of the vorticity maximal function. We first show that $\oo^n$ locally may be written as the sum of a positive measure in $H^{-1}(\Omega)$ and a uniformly bounded function. To see this, fix $\mathcal{K}\subset\subset\Omega$ and let $\vare>0$ satisfy  $\mathcal{K}\subset\{|x|< 1-3\vare\}$. Choose $\chi_\vare \in C^\infty_c(\Omega)$ such that $0\leq\chi_\vare\leq 1$, $\chi_\vare (x) \equiv 1 $ if $|x|< 1-2\vare$, $\chi_\vare (x) \equiv 0$ if $1-\vare<|x|\leq 1$. Let $\oo^n_\vare \equiv \chi_\vare\oo^n$, and note that $\oo^n_\vare \equiv \oo^n$ on $\mathcal{K}$. We extend $\oo^n_\vare$ to all of $\real^2$ by setting it to vanish outside of $\Omega$. Observe that $\oo^n_\vare$ is a solution of the following heat equation in the full plane:
\[\partial_t\oo^n_\vare = \nu_n \Delta \oo^n_\vare - \nu_n \oo^n_\vare \Delta\chi_\vare-2\nu_n \nabla \oo^n \cdot \nabla \chi^n_\vare \equiv \nu_n\Delta\oo^n_\vare - F^n_\vare.\]
It follows that
\[ \oo^n_\vare = e^{\nu_n t\Delta}[\oo^n_\vare(0,\cdot)] - \int_0^t e^{\nu_n(t-s)\Delta}[F^n_\vare(s,\cdot)]\,ds \equiv I_n + I\!\!I_n.\]
We will analyze $(\oo^n_\vare)_{\big|_{\mathcal{K}}}\equiv \oo^n_{\big|_{\mathcal{K}}}$.
We make two claims.
\begin{claim} \label{Claim1}
We have $\{I_n = e^{\nu_n t \Delta}[\oo^n_\vare(0,\cdot)]\}_n$ is a bounded subset of $L^\infty((0,T);\mathcal{BM}(\real^2)\cap H^{-1}(\real^2))$ and $e^{\nu_nt\Delta}[\oo^n_\vare(0,\cdot)]\geq 0$.
\end{claim}

\begin{claim} \label{Claim2}
There exists $C=C(\|\oo_0\|_{\mathcal{BM}(\Omega)},\vare)>0$ such that, on $\mathcal{K}$,
\[\left|I\!\!I_n = \int_0^t e^{\nu_n(t-s)\Delta}[F^n_\vare(s,\cdot)]\,ds\right|\leq C,\]
for all $n$, $t\in [0,T]$.
\end{claim}

It follows from Claim \ref{Claim1} and well-known estimates in potential theory, see also \cite{Schochet1995}, that
\[\int_{B(x;r)\cap \Omega} I_n \, dy \leq \left( \sup_n \|I_n\|_{H^{-1}(\Omega)} \right) |\log r |^{-1/2} \to 0,\]
as $r\to 0$, uniformly in $n$ and $x \in \mathcal{K}$. In addition, from Claim \ref{Claim2} we find
\[\int_{B(x;r)\cap \Omega} | I\!\!I_n | \, dy \leq Cr^2 \to 0,\]
as $r\to 0$, uniformly in $n$ and $x \in \mathcal{K}$. Thus Claims \ref{Claim1} and \ref{Claim2} are sufficient to establish Assumption (3) of Theorem \ref{mainthm}.

\begin{proof}[Proof of Claim \ref{Claim1}]
Recall $\oo^n_\vare(0,\cdot)=\chi_\vare \oo^n_0$. By hypothesis, $\oo^n_0\geq 0$ and, also, $\chi_\vare \geq 0$. It follows from the maximum principle for the heat equation that
$e^{\nu_n t \Delta}[\oo^n_\vare(0,\cdot)]\}_n\geq 0$. In addition, the $L^1$-norm of $e^{\nu_n t \Delta}[\oo^n_\vare(0,\cdot)]\}_n$ is a non-increasing function of time, so that, using again the hypotheses we made on $\oo^n_0$,
\[\|e^{\nu_n t \Delta}[\oo^n_\vare(0,\cdot)] \|_{L^1(\real^2)}\leq \|\oo^n_\vare(0,\cdot)\|_{L^1} \leq \|\oo_0\|_{\mathcal{BM}(\Omega)}.\]
Lastly, we argue that $e^{\nu_n t \Delta}[\oo^n_\vare(0,\cdot)]$ is uniformly bounded in $L^\infty((0,T);H^{-1}(\real^2))$. We first observe that
\[\oo^n_\vare(0,\cdot)=\chi_\vare \oo^n_0 = \nabla^\perp \cdot (\chi_\vare \bu^n_0)-\bu^n_0\cdot\nabla^\perp\chi_\vare.\]
Therefore,
\[e^{\nu_n t \Delta}[\oo^n_\vare(0,\cdot)] = \nabla^\perp \cdot \left\{ e^{\nu_n t \Delta}[\chi_\vare \bu^n_0]\right\} - e^{\nu_n t \Delta}[\bu^n_0\cdot\nabla^\perp\chi_\vare].
\]
Clearly, $e^{\nu_n t \Delta}[\chi_\vare \bu^n_0]$ and $e^{\nu_n t \Delta}[\bu^n_0\cdot\nabla^\perp\chi_\vare]$ are, both, bounded subsets of $L^\infty((0,T);L^2(\real^2))$, hence the desired assertion follows.
\end{proof}

\begin{proof}[Proof of Claim \ref{Claim2}]
We write
\begin{align*}
-\int_0^t e^{\nu_n(t-s)\Delta}[F^n_\vare(s,\cdot)]\,ds
& =
\nu_n\int_0^t e^{\nu_n (t-s)\Delta}[\oo^n \Delta\chi_\vare (s,\cdot)]\,ds +
2\nu_n \int_0^t e^{\nu_n (t-s)\Delta}[\nabla \oo^n \cdot \nabla \chi^n_\vare (s,\cdot)]\,ds\\
& \equiv A^n+B^n.
\end{align*}
Now,
\begin{align*}
|A^n {(t,x)}|
&=\left|
\nu_n\int_0^t \int_{\real^2} \frac{1}{4\pi\nu_n(t-s)}e^{-|x-y|^2/(4\nu_n (t-s))}\oo^n(s,y) \Delta\chi_\vare (y) \,dyds
\right|\\
&\leq \frac{\nu_n}{\pi\vare^2}\sup_{\rho} (\rho e^{-\rho})T\|\oo^n\|_{L^\infty((0,T);L^1(\Omega))}\|\Delta\chi_\vare\|_{L^\infty(\real^2)},
\end{align*}
since $\supp \Delta \chi_\vare \subset \{1-2\vare \leq |y| \leq 1-\vare\}$ and $|x|<1-3\vare$. It follows that $|A^n|$ is bounded, uniformly with respect to $n$, $t\in (0,T)$, for $x\in\mathcal{K}$.

Next, we examine $B^n$:
\begin{align*}
|B^n {(t,x)}|
&= \left| 2\nu_n \int_0^t \int_{\real^2} \frac{1}{4\pi\nu_n(t-s)}e^{-|x-y|^2/(4\nu_n (t-s))}\nabla \oo^n (s,y)\cdot \nabla \chi^n_\vare(y)\,dyds
\right| \\
&= \left|
-2\nu_n \int_0^t \int_{\real^2} \frac{1}{4\pi\nu_n(t-s)}e^{-|x-y|^2/(4\nu_n (t-s))} \oo^n (s,y)\Delta \chi^n_\vare(y)\,dyds\right. \\
&\qquad  \left.
+4\nu_n \int_0^t \int_{\real^2} \frac{(x-y)}{\pi(4\nu_n(t-s))^2}e^{-|x-y|^2/(4\nu_n (t-s))} \oo^n (s,y)\cdot \nabla \chi^n_\vare(y)\,dyds
\right|.
\end{align*}
Therefore, reasoning similarly as for $A^n$, we find
\begin{align*}
|B^n|
&\leq \frac{\nu_n}{\pi\vare^2}\sup_{\rho} (\rho e^{-\rho})T\|\oo^n\|_{L^\infty((0,T);L^1(\Omega))}\|\Delta\chi_\vare\|_{L^\infty(\real^2)}\\
&\qquad +
\frac{\nu_n}{\pi\vare^3}\sup_{\rho} (\rho^2 e^{-\rho})T\|\oo^n\|_{L^\infty((0,T);L^1(\Omega))}\|\nabla\chi_\vare\|_{L^\infty(\real^2)}.
\end{align*}
Hence it follows that $|B^n|$ is uniformly bounded, for $x \in \mathcal{K}$, with respect to $n$ and $t\in (0,T)$.
\end{proof}

This concludes the proof of Proposition \ref{vortsheetexample}.
\end{proof}

\ \par \

\textbf{Acknowledgments.}  M.C. Lopes Filho acknowledges the support of Conselho Nacional de Desenvolvimento Cient\'{\i}fico e Tecnol\'ogico -- CNPq through grant \# 306886/2014-6 and of FAPERJ through grant \# E-26/202.999/2017 . H.J. Nussenzveig Lopes thanks the support of Conselho Nacional de Desenvolvimento Cient\'{\i}fico e Tecnol\'ogico -- CNPq through grant \# 307918/2014-9 and of FAPERJ through grant \# E-26/202.950/2015. V. Vicol was partially supported by the NSF grant DMS-1652134 and an Alfred P. Sloan Fellowship. P. Constantin was  partially supported by the NSF grant DMS-1713985.

\footnotesize{
\bibliographystyle{plain}
\def\cprime{$'$} \def\polhk#1{\setbox0=\hbox{#1}{\ooalign{\hidewidth
  \lower1.5ex\hbox{`}\hidewidth\crcr\unhbox0}}}

}

\end{document}